\newcommand{\cJ}{\mathcal{J}}
\newcommand{\cT}{\mathcal{T}}
\newcommand{\Spc}{\mathrm{Spc}}
\newcommand{\Sp}{\mathrm{Sp}}
\theoremstyle{plain}
\newtheorem{theo}[equation]{Theorem}
\theoremstyle{definition}
\newtheorem{defi}[equation]{Definition}
\theoremstyle{remark}
\newtheorem{rema}[equation]{Remark}
\newtheorem*{ack}{Acknowledgments}
\title[Universal support for triangulated categories]{Universal support for triangulated categories}
\author[P.\ Balmer]{Paul Balmer}
\address{UCLA Mathematics Department, Los Angeles, CA 90095-1555, USA}
\email{balmer@math.ucla.edu}
\author[P.\ S.\ Ocal]{Pablo S.\ Ocal}
\address{UCLA Mathematics Department, Los Angeles, CA 90095-1555, USA}
\email{socal@math.ucla.edu}
\date{2023 August 21}
\subjclass[2020]{18F99; 18G80}
\keywords{Triangulated category, thick subcategory, spectrum, support.}
\thanks{The first author was supported by NSF grant DMS-215375. The second author was supported by an AMS-Simons Travel Grant.}
\begin{document}

\begin{abstract}
We revisit a result of Gratz and Stevenson on the universal space that carries supports for objects of a triangulated category, in the absence of a tensor product.
\end{abstract}

\maketitle

Throughout, $\cT$ denotes an essentially small triangulated category.
In~\cite{balmer:spectrum}, it was established that if $\cT$ is moreover a \emph{tensor}-triangulated category then there exists a universal (final) space~$X$ carrying a support datum for objects of~$\cT$. In that context, a support datum on~$\cT$ consists of closed subsets~$\sigma(a)\subseteq X$ for every object~$a$ in~$\cT$ satisfying
\begin{enumerate}[(SD\,1)]
\item
\label{it:SD1}%
$\sigma(0) = \emptyset$,
\smallbreak
\item
\label{it:SD2}%
$\sigma(a\oplus b) = \sigma(a)\cup \sigma(b)$ for all objects $a,b\in \cT$,
\smallbreak
\item
\label{it:SD3}%
$\sigma(\Sigma a) = \sigma(a)$ for all objects $a\in \cT$,
\smallbreak
\item
\label{it:SD4}%
$\sigma(a)\subseteq \sigma(b)\cup\sigma(c)$ for every distinguished triangle $a\to b\to c\to \Sigma a$ of~$\cT$,
\smallbreak
\item
\label{it:SD5}%
$\sigma(1_{\otimes})=X$,
\smallbreak
\item
\label{it:SD6}%
$\sigma(a\otimes b)=\sigma(a)\cap \sigma(b)$ for all objects $a,b\in\cT$.
\end{enumerate}
This universal space, denoted~$\Spc(\cT)$ and called the \emph{tt-spectrum} of~$\cT$, has become the basis of tensor-triangular geometry.
An early theorem in that field is that $\Spc(\cT)$ can be used to describe the lattice of thick $\otimes$-ideal subcategories of~$\cT$. (We say `thick' to abbreviate `triangulated and thick'.)

In view of this result, Buan-Krause-Solberg~\cite{buan-krause-solberg:ideal} and Kock-Pitsch~\cite{kock-pitsch:stone} argued that lattice theory should play the primordial role in this story.
However Br{\"u}ning~\cite{bruning:hereditary} showed early on that, without tensor, the lattice of thick subcategories is simply not distributive in general, already for the derived category of the~$A_2$ quiver.
Stevenson also pointed out that the same issue occurs in algebraic geometry, e.g.\ for the projective line.

Leaving lattices aside, we can return to the original motivation and ask for a universal space carrying supports for objects of~$\cT$ without the tensor-related axioms~(SD\,\ref{it:SD5}) and~(SD\,\ref{it:SD6}).
And if such a universal space exists, we can ask whether it recovers the lattice of thick subcategories.
After answering these questions, we realized that Gratz and Stevenson~\cite{gratz-stevenson:approximating} implicitly provide a solution, namely by combining their Proposition~5.3.3 and their Subsection~5.4.
We hope there is some didactic value in spelling out our answer.

\begin{defi}
\label{defi:sup-datum}%
A \emph{support datum} on~$\cT$ is a pair $(X,\sigma)$ where $X$ is a topological space and $\sigma : \mathrm{Obj}(\cT)\to \mathrm{Closed}(X)$ associates to every object $a\in \cT$ a closed subset $\sigma(a) \subseteq X$ satisfying Conditions~(SD\,\ref{it:SD1})--(SD\,\ref{it:SD4}).
A \emph{morphism of support data} $f:(X,\sigma)\to (Y,\tau)$ is a continuous map $f:X\to Y$ such that $\sigma(a) = f^{-1}(\tau(a))$ for all objects $a \in \cT$.
\end{defi}

The good news is: The universal support theory always exists.

\begin{theo}
\label{Thm:Sp}%
Every essentially small triangulated category~$\cT$ admits a final support datum.
\end{theo}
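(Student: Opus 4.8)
The plan is to realize the final support datum on the set $\Sp(\cT)$ of \emph{all} thick subcategories of~$\cT$ --- a genuine set because $\cT$ is essentially small --- mimicking the construction of the support in tensor-triangular geometry, with thick subcategories in the role that prime thick $\otimes$-ideals play there. For an object~$a\in\cT$, put
\[
U(a)=\{\,\mathcal{S}\in\Sp(\cT)\mid a\in\mathcal{S}\,\}
\qquad\text{and}\qquad
\supp(a)=\Sp(\cT)\setminus U(a)=\{\,\mathcal{S}\in\Sp(\cT)\mid a\notin\mathcal{S}\,\}.
\]
Since a thick subcategory contains $a\oplus b$ if and only if it contains both $a$ and~$b$, one has $U(a)\cap U(b)=U(a\oplus b)$, and $U(0)=\Sp(\cT)$; hence $\{U(a)\}_{a\in\cT}$ is a basis of opens for a topology on~$\Sp(\cT)$, for which every $\supp(a)$ is closed by construction.

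I would first check that $(\Sp(\cT),\supp)$ is itself a support datum. Axioms (SD\,1)--(SD\,3) are immediate: $\supp(0)=\emptyset$ because every thick subcategory contains~$0$; $\supp(a\oplus b)=\supp(a)\cup\supp(b)$ because thick subcategories are closed under~$\oplus$ and contain the summands of their objects; and $\supp(\Sigma a)=\supp(a)$ because they are closed under $\Sigma$ and $\Sigma^{-1}$. For (SD\,4), let $a\to b\to c\to\Sigma a$ be distinguished and let $\mathcal{S}\notin\supp(b)\cup\supp(c)$, that is, $b,c\in\mathcal{S}$; then $a\in\mathcal{S}$ since $\mathcal{S}$ is triangulated, so $\mathcal{S}\notin\supp(a)$, which is exactly the inclusion $\supp(a)\subseteq\supp(b)\cup\supp(c)$.

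The heart of the matter is the universal property. Given an arbitrary support datum $(X,\sigma)$, I would define $f\colon X\to\Sp(\cT)$ by $f(x)=\cT_x:=\{\,a\in\cT\mid x\notin\sigma(a)\,\}$. The point deserving care is that $\cT_x$ really is a thick subcategory: closure under~$\oplus$ and under passage to summands follows from (SD\,2); closure under $\Sigma^{\pm1}$ from (SD\,3); and closure under cones and isomorphisms from (SD\,1) and (SD\,4), via the standard observation that, applying (SD\,4) to the three rotations of a distinguished triangle $a\to b\to c\to\Sigma a$ and using (SD\,3), each of $\sigma(a),\sigma(b),\sigma(c)$ lies in the union of the other two. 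Granting this, $f$ is continuous because $f^{-1}(U(a))=\{x\mid x\notin\sigma(a)\}=X\setminus\sigma(a)$ is open, and it is a morphism of support data because $f^{-1}(\supp(a))=\{x\mid a\notin\cT_x\}=\sigma(a)$. For uniqueness, any morphism $g\colon(X,\sigma)\to(\Sp(\cT),\supp)$ satisfies, for all~$x$ and~$a$, the equivalences $a\in g(x)\iff g(x)\notin\supp(a)\iff x\notin\sigma(a)\iff a\in\cT_x$; hence $g(x)$ and $\cT_x$ are thick subcategories with the same objects, so $g(x)=\cT_x=f(x)$. This exhibits $(\Sp(\cT),\supp)$ as a final support datum.

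I do not anticipate a genuine obstacle: the content lies in guessing that the underlying set must be \emph{all} thick subcategories, after which the verifications are bookkeeping. (As a sanity check that the set cannot be shrunk: given a thick subcategory~$\mathcal{S}$, taking $X=\{\ast\}$ with $\sigma(a)=\emptyset$ when $a\in\mathcal{S}$ and $\sigma(a)=\{\ast\}$ otherwise yields a support datum --- here (SD\,4) holds precisely because $\mathcal{S}$ is triangulated --- with $\cT_\ast=\mathcal{S}$, so any final space must contain a point realizing every~$\mathcal{S}$.) The only step that is not purely formal is checking that $\cT_x$ is a triangulated subcategory, and that is exactly where one must feed all three rotations of a distinguished triangle into~(SD\,4).
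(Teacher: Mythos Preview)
Your proof is correct and follows essentially the same approach as the paper: define $\Sp(\cT)$ as the set of all thick subcategories with the topology generated by the $\supp(a)$ as basic closed sets, send $x\in X$ to $\cT_x=\{a\mid x\notin\sigma(a)\}$, and verify the universal property via $f^{-1}(\supp(a))=\sigma(a)$. You supply more detail than the paper (the explicit verification of (SD\,1)--(SD\,4) for $(\Sp(\cT),\supp)$, the care with the three rotations when checking $\cT_x$ is triangulated, and the one-point sanity check), but the construction and argument are the same.
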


\begin{proof}
Now comes the bad news. Define the set~$\Sp(\cT)$ as follows
\begin{equation}
\label{eq:SpT}%
\Sp(\cT) = \{\cJ\subseteq \cT \,|\, \cJ\text{ thick subcategory of }\cT \}
\end{equation}
together with the subsets~$\sup(a)= \{\cJ\in \Sp(\cT) \,|\, a\notin \cJ \}$ for every object~$a\in \cT$.
These give a final support datum on~$\cT$ if we define a topology on~$\Sp(\cT)$ as having~$\{\sup(a)\}_{a\in \cT}$ as basis of closed subsets.
Let $(X,\sigma)$ be a support datum on $\cT$. Define~$f\colon X\to \Sp(\cT)$ by
\begin{equation*}
f(x)=\{a\in\cT \,|\, x\notin \sigma(a)\}
\end{equation*}
for every~$x\in X$. From $\sigma$ being a support datum one checks that $f(x)$ is a thick subcategory, so $f$ is well-defined.
By definition of~$\sup(-)$ and of~$f$ we have for every~$a\in\cT$
\begin{equation*}
f^{-1}(\sup(a)) = \{x\in X \,|\, f(x)\in\sup(a) \} = \{x\in X \,|\, a\notin f(x) \} = \{x\in X \,|\, x\in\sigma(a) \} = \sigma(a).
\end{equation*}
This implies that $f:X\to\Sp(\cT)$ is continuous, and a morphism of support data.
For uniqueness, note that if $g:(X,\sigma)\to(\Sp(\cT),\sup)$ is another such morphism then we have
\begin{align*}
g(x) &= \{a\in \cT \,|\, a\in g(x) \} = \{a\in \cT \,|\, g(x)\notin \sup(a) \}\\
 &= \{a\in \cT \,|\, x\notin g^{-1}(\sup(a)) \} = \{a\in \cT \,|\, x\notin \sigma(a) \} = f(x)
\end{align*}
for all $x\in X$, using the definitions of~$\sup$ and of~$f$, together with~$g^{-1}(\sup(a))=\sigma(a)$.
\end{proof}

\begin{rema}
The space $\Sp(\cT)$ does `classify' all thick subcategories of~$\cT$ but in an extremely dull way: It \emph{is} itself the whole lattice.
Unfortunately, the compression of information that prime ideals allow in the tensor setting does not happen anymore.
And this is not because of whimsical choices in the construction~\eqref{eq:SpT}. Being the solution to a universal problem, the space~$\Sp(\cT)$ does not involve any choice. It is what it is.
\end{rema}

\begin{ack}
We thank Greg Stevenson for explanations about~\cite{gratz-stevenson:approximating}. We thank Henning Krause for mentioning our viewpoint in~\cite{krause:analogue}.
\end{ack}

%------------------------------------------------------------------------------

%------------------------------------------------------------------------------
\end{document}